\let\pa\partial
\let\var\varepsilon
\newcommand{\N}{{\mathbb N}}
\newcommand{\R}{{\mathbb R}}
\newtheorem{thm}{Theorem}
\newtheorem{rem}[thm]{Remark}
\newcommand{\dint}{\displaystyle\int}
\begin{document}

\title[Reaction-diffusion equations with aggregation]{On a class of reaction-diffusion equations with aggregation}

\author[L. Chen]{Li Chen}
\address{Universit\"at Mannheim, 68131, Mannheim}
\email{chen@math.uni-mannheim.de}

\author[L. Desvillettes]{Laurent Desvillettes}
\address{Universit\'{e} Paris Diderot, Sorbonne Paris Cit\'{e}, Institut de Math\'{e}matiques de Jussieu-Paris Rive Gauche, UMR 7586, CNRS, Sorbonne Universit\'{e}s, UPMC Univ. Paris 06, F-75013, Paris, France}
\email{desvillettes@math.univ-paris-diderot.fr}

\author[E. Latos]{Evangelos Latos}
\address{University of Graz
Institute for Mathematics and Scientific Computing, A-8010 Graz, Heinrichstr. 36, Austria}
\email{evangelos.latos@uni-graz.at}

\date{\today}

\thanks{This work was supported by DFG Project CH 955/3-1. The authors have been partially supported by
the French ``ANR blanche'' project Kibord, grant ANR-13-BS01-0004, and
from the Universit\'e Sorbonne Paris Cit\'e, in the framework of the
``Investissements d'Avenir'', grant ANR-11-IDEX-0005.}

\begin{abstract}
In this paper, global-in-time existence and blow up results are shown for a reaction-diffusion equation appearing in the theory of aggregation phenomena (including chemotaxis). Properties of the corresponding steady-state problem are also presented. Moreover, the stability around constant equilibria and the non-existence of non-constant solutions are studied in certain cases.
\end{abstract}

\keywords{
Reaction-diffusion with aggregation;
Global existence of classical solutions;
Blow-up; Stability;}

\subjclass[2000]{}

\maketitle

\section{Introduction}


 We consider the following initial boundary value problem
\begin{align}\label{cdp0}
  \left\{\begin{array}{ll}
  \partial_t u
  =  \Delta [(a-b u)u]+(c-d u)u
 & \text{in } \Omega\times(0,T),  \\
        \mathcal{B}[u]=0
 & \text{on } \partial\Omega \times(0,T),\\
u(x,0)=u_0(x)\geq 0
        \end{array}
\right.
\end{align}
where $a,b>0$ and $c,d\in\mathbb{R}$. Here, $\Omega$ is an open bounded domain in $\mathbb{R}^n$, and $ \mathcal{B}[u]$ denotes a boundary operator of Neumann or Dirichlet type, 
i.e.
\begin{eqnarray*}
	u|_{\partial\Omega}=0, \quad  \mbox{ or } \quad (a-2bu)\nabla u\cdot\gamma|_{\partial\Omega}=0.
\end{eqnarray*}
where $\gamma$ is the outer unit normal vector of $\partial\Omega$. For the sake of simplicity, we take $|\Omega|=1$.

One of the motivations to study such an equation comes from  the structure similarities that exist with the parabolic-elliptic Keller-Segel models for chemotaxis, i.e.
$$
\partial_tu -\nabla\cdot (\nabla u -u\nabla V * u)=0,
$$
where $V$ is the fundamental solution of Poisson equation (or some other given potential in the case of general diffusion aggregation equations).
If formally the interaction potential $V$ is replaced by a the Dirac mass $\delta_0$, then the above equation is reduced to
$$
\partial_tu -\nabla\cdot (\nabla u -u\nabla u)=0.
$$

In \cite{cgk18}, the authors propose a  microscopic particle model which converges at the formal level towards such a PDE.

This  microscopic particle model, corresponding to \eqref{cdp0} with $c=d=0$ and
$\Omega = \R^n$, is the following:

\begin{eqnarray*}
&&d X^i_t= \sqrt{2a}\,dB^i_t +\frac{1}{N}\sum_{j\neq i} \nabla V_\varepsilon(|X^i_t-X^j_t|)dt,\\
&&X^i_0=\xi_i \quad \mbox{ i.i.d. random variables with distribution } u_0,
\end{eqnarray*}
where $V_\varepsilon(x)=\varepsilon^{-n}V(x/\varepsilon)$ for $\varepsilon>0$, $\dint_{\mathbb{R}^n}V(x)dx=2b$, and $B^i_t$ are i.i.d. Brownian motions.

It is well known (for example in \cite{Sni,Oel} that under suitable assumptions on $V$, the particle model converges (when $N$ goes to infinity) to the following intermediate (with fixed $\var>0$) nonlocal problem:
\begin{eqnarray*}
&&d \bar X^i_t= \sqrt{2a}\,dB^i_t+\int_{\mathbb{R}^n} \nabla V_\varepsilon(|\bar X^i_t-y|) u^\varepsilon(y,t)dydt,\\
&&\bar X^i_0=\xi_i \quad \mbox{ i.i.d. random variables with distribution } u_0,
\end{eqnarray*}
where $u^\varepsilon$ is the distribution of the i.i.d. random processes $\bar X^i_t$ at time $t$. By It$\hat o$'s formula one can obtain the following nonlocal partial differential equation for $u^\varepsilon$:
\begin{equation}\label{star1}
	\partial_tu^\varepsilon -\nabla\cdot (a\nabla u^\varepsilon -u^\varepsilon\nabla V_\varepsilon * u^\varepsilon)=0.
\end{equation}
Furthermore, in the parabolic regime, i.e. when $0\leq u_0<\frac{a}{2b},$ it is proved in \cite{cgk18} that the limit of $u^\varepsilon$ satisfies eq. (\ref{cdp0}) with $c=d=0$:
\begin{equation}\label{star2}
	\partial _t u- \Delta (a\, u - b\, u^2) =0.
\end{equation}

The physical meaning of the unknown $u$ is that of a concentration, therefore one considers only nonnegative solutions corresponding to nonnegative initial data.

Furthermore (like in the case of Keller-Segel system), problem \eqref{cdp0} with $c=d=0$ (and the homogeneous Neumann boundary condition) possesses the following entropy structure:
\begin{equation}
\frac{d}{dt}\mathcal{E}(t)
:=
\frac{d}{dt}\int_\Omega\left(
au(\log u-1)-bu^2
\right)\;dx
=
 -\int_\Omega \frac{1}{u}(a-2bu)^2|\nabla u|^2\;dx \le 0.
\end{equation}
This entropy is a combination of a positive part from the diffusion and a negative one from the aggregation. It needs to be pointed out that here the aggregation phenomenon is much stronger than the one appearing in Keller-Segel systems because of the singular potential that appeared
 in \eqref{star1},\eqref{star2}.
\medskip

As for the reaction term, it is considered to be of logistic (mono-stable) type so that (when $d>0$) a significant dampening effect is exercised on the density $u$ at those points where $u$ becomes large.
\medskip

The arrangement of the paper is the following.
In section 2, global existence and uniqueness of classical solutions are obtained for initial data such that parabolicity is expected to hold.  The rest of the paper concerns cases in which parabolicity is expected to be lost at some point, so that blowup may happen. Considerations on the possible steady states and their stability as well as direct estimates of blowup are presented. Section 3 is devoted to the study of the steady states. The non-existence of non-trivial steady states is proved via Pohozaev's type arguments. Furthermore, the linear stability of constant steady states is investigated. Finally, in section 4,  blow up (in finite time) results  are presented. Two different procedures  are carried out: Kaplan's method is used for the problem with Dirichlet boundary condition on one hand, and the concavity method is used for the problem with Neumann boundary condition on the other hand. In the end,  we present an annex where blow up is directly observed in a class of explicit solutions linked to Barenblatt profiles, and we draw conclusions in a final section.

\section{Global existence}

In this section, the global existence and uniqueness of a solution is obtained thanks to Leray-Schauder fixed point theorem, under the condition that the initial datum belongs to the parabolic region.
\medskip

Observing that
$$ \Delta [(a-b u)u]
  =  -b\Delta \left[\left(u-\frac{a}{2b}\right)^2 \right], $$
and using the notation $v := u-\frac{a}{2b}$, \eqref{cdp0} can be rewritten as
\begin{align}\label{cdp1}
  \left\{\begin{array}{ll}
  \partial_t v
  =  -b\Delta (v^2) +(c-d\frac{a}{2b}-dv)(v+\frac{a}{2b})
 & \text{in } \Omega\times(0,T),  \\
         \mathcal{B}[v+\frac{a}{2b}]=0
 & \text{on } \partial\Omega \times(0,T),\\
        v(x,0)=v_0(x).
        \end{array}
\right.
\end{align}
It can be expected that global existence holds in the case when the parabolicity can be kept in the evolution (that is, when for all time $v<0$, or equivalently $u < \frac{a}{2b}$). At the same time, the logistic term $u\,(c-du)$ and the expected nonnegativity of $u$ imply that the estimate $0\le u \le \frac{c}{d}$ should also hold. Therefore, a natural sufficient condition for getting global existence for eq. (\ref{cdp0}) is $0 \le u_0 < \frac{a}{2b}$, together with $\frac{c}{d} < \frac{a}{2b}$. The theorem below states a precise result in this direction:
\begin{thm}[Global Existence and uniqueness]\label{GlexTh}
	Let $\Omega$ be a smooth bounded domain in $\R^n$.
Assume $a,b>0$ and $\frac{a}{2b}>\frac{c}{d}$.
Let also $0\leq u_0\in C^\alpha(\overline{\Omega}),\ \alpha\in(0,1)$ and
	\begin{equation} \label{assumu0}
	\max_{x\in{\overline{\Omega}}} u_0(x)< \frac{a}{2b},
	\end{equation}
	with compatibility condition $\mathcal{B}[u_0]=0$. Then
 problem \eqref{cdp0}, together with homogeneous Neumann or homogeneous Dirichlet boundary condition
 $\mathcal{B}[u]=0$ has a unique global-in-time classical solution. In addition, it holds that
\begin{equation}
	0\leq u(x,t)<\frac{a}{2b}, \quad \forall x\in \overline{\Omega}, t\geq 0.
\end{equation}
\end{thm}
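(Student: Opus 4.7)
The plan is to combine a pointwise a priori bound confining $u$ to the parabolic region $\{0\le u<a/(2b)\}$ with the Leray--Schauder fixed point theorem applied on a parabolic H\"older space. Expanding the diffusion one rewrites the equation in non-divergence form as
\[
\partial_t u = (a-2bu)\,\Delta u - 2b\,|\nabla u|^2 + (c-du)\,u,
\]
which is uniformly parabolic whenever $a-2bu$ is bounded away from $0$. Once the a priori bound is in hand, classical Ladyzhenskaya--Solonnikov--Ural'tseva Schauder theory takes over.

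\textbf{The a priori $L^\infty$ bound (main obstacle).} I would first show that any classical solution on $[0,T]$ issued from an admissible $u_0$ satisfies $0\le u(x,t)<a/(2b)$ throughout $\overline\Omega\times[0,T]$. The lower bound $u\ge 0$ follows from the standard parabolic maximum principle applied either under Dirichlet data (for which $u=0$ on $\partial\Omega$) or under the Neumann boundary condition (which reduces to $\nabla u\cdot\gamma=0$ as long as $u\ne a/(2b)$), using that $(c-du)u$ vanishes at $u=0$. For the upper bound, I exploit the fact that any spatially constant $M(t)$ solving the logistic ODE $\dot M=(c-dM)M$ is itself a classical solution of the PDE; choosing $M(0)=\max_{\overline\Omega}u_0<a/(2b)$ and using $c/d<a/(2b)$ in the logistic regime $d>0$, one has $M(t)\le\max(M(0),c/d)<a/(2b)$ for every $t\ge 0$. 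Comparison with $M$ then bounds $u$ from above. The subtle point is that the comparison principle requires non-degeneracy, so I would argue by continuity: set $T^\star=\sup\{t\le T:\,u<a/(2b) \text{ on }\overline\Omega\times[0,t]\}$; on each $[0,T^\star-\varepsilon]$ the equation is uniformly parabolic, comparison yields $u\le M<a/(2b)$, and letting $\varepsilon\downarrow 0$ forces $T^\star=T$. Closing this self-consistency loop is the main obstacle.

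\textbf{Leray--Schauder set-up.} Fix $T>0$ arbitrary and pick $M^\ast\in(\max u_0,a/(2b))$ above every value of the barrier $M$. With the truncation $\Pi(s)=\max(0,\min(s,M^\ast))$, I define on $X=C^{\alpha,\alpha/2}(\overline\Omega\times[0,T])$ the map $\Phi(\tilde u,\sigma)=u$, where $u$ solves the \emph{linear} uniformly parabolic problem
\[
\partial_t u -\operatorname{div}\!\bigl[(a-2b\Pi(\tilde u))\nabla u\bigr]=\sigma\,(c-d\Pi(\tilde u))\,u,\quad \mathcal{B}[u]=0,\quad u(\cdot,0)=\sigma u_0.
\]
Since $a-2b\Pi(\tilde u)\ge a-2bM^\ast>0$, parabolic Schauder estimates give $u\in C^{2+\alpha,1+\alpha/2}$ with bounds depending only on $\|\tilde u\|_X$, so $\Phi(\cdot,\sigma)$ is continuous and compact into $X$ and $\Phi(\cdot,0)\equiv 0$. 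The a priori bound of the previous step, applied with initial datum $\sigma u_0$, shows that every fixed point $u=\Phi(u,\sigma)$ stays strictly below $M^\ast$; the truncation $\Pi$ is therefore inactive and $u$ solves the original quasilinear problem. Leray--Schauder then produces a fixed point at $\sigma=1$, which is the desired classical solution on $[0,T]$.

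\textbf{Globalisation and uniqueness.} Since the $L^\infty$ bound does not depend on $T$, the solution extends globally in time. For uniqueness I would subtract two solutions $u_1,u_2$, both strictly below $a/(2b)$: the difference $w=u_1-u_2$ satisfies a linear parabolic equation whose coefficients are bounded thanks to the uniform separation from $a/(2b)$ and the local Lipschitz dependence of the nonlinearities on $u$, so Gr\"onwall's inequality gives $w\equiv 0$.
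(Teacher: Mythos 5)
Your overall strategy is the same as the paper's (confine $u$ a priori to the parabolic region $0\le u<\frac{a}{2b}$, then run Leray--Schauder on $C^{\alpha,\alpha/2}$ with a frozen diffusion coefficient and Schauder estimates), and your $L^\infty$ confinement step is sound: comparison with the spatially homogeneous logistic solution plus the continuation argument in $T^\star$ works, and its implicit use of $d>0$ is no worse than the paper's own step ``$c-du(x_0,t_0)\ge 0\Rightarrow u(x_0,t_0)\le c/d$''. The problem is the linearization you build the fixed point on.

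Concretely: for $\tilde u$ only in $C^{\alpha,\alpha/2}$ (and $\Pi$ only Lipschitz), the coefficient $a-2b\,\Pi(\tilde u)$ is merely H\"older in $(x,t)$, so the operator $\mathrm{div}\,[(a-2b\Pi(\tilde u))\nabla u]$ cannot be expanded into non-divergence form ($\nabla\Pi(\tilde u)$ does not exist), and classical Schauder theory does \emph{not} give $u\in C^{2+\alpha,1+\alpha/2}$ with bounds ``depending only on $\|\tilde u\|_X$''; divergence-structure theory yields at best $C^{1+\beta}$ (weak) solutions. As written, this invalidates both the compactness/continuity claims and, more importantly, the assertion that a fixed point is a classical solution of the original quasilinear problem. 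The paper sidesteps exactly this by freezing only the coefficient of $\Delta u$ and keeping the gradient term in the unknown, i.e.\ solving $\partial_t u-(1-\sigma)\Delta u-\sigma(a-2bw)\Delta u+2\sigma b|\nabla u|^2=\sigma u(c-du)$, a non-divergence-form problem covered by Ladyzhenskaya--Solonnikov--Ural'tseva, Chapter V. Two further points need attention in your scheme: (i) your a priori bound is proved for the \emph{untruncated} equation, whereas a fixed point solves the truncated one, so before discarding $\Pi$ you must either run a maximum-principle argument directly on the truncated equation (using $c-dM^\ast<0$) or repeat your continuation argument on the maximal interval where $u\le M^\ast$; as stated the step is circular. (ii) Leray--Schauder requires a bound on the set of fixed points in the norm of $X$, uniformly in $\sigma$, not just the $L^\infty$ confinement; the paper obtains this from the LSU gradient and time-derivative estimates for quasilinear equations, and your proposal supplies no substitute (an estimate on the map's output in terms of $\|\tilde u\|_X$ is not an a priori bound on fixed points). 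These defects are repairable -- freeze the coefficient as the paper does, or work with $C^{1+\beta}$ estimates and bootstrap at the fixed point -- but the argument as written does not close.
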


\begin{proof} We first observe that we can take $\varepsilon_0>0$ small enough in such a way that $\max_{x\in{\overline{\Omega}}} u_0(x)\leq \frac{a}{2b}-\varepsilon_0$ and $\frac{c}{d} \leq  \frac{a}{2b}-\varepsilon_0$. Then we will prove the existence and uniqueness of a solution $u$ of the problem, which satisfies the estimate 
 $$
 0\leq u(x,t)\leq \frac{a}{2b}-\varepsilon_0, \quad \forall x\in \overline{\Omega}, t\geq 0.
 $$
 For any fixed $T>0$, we will use the Leray-Schauder fixed point theorem to prove the existence. Let
$$
X=\{w\in C^{\alpha,\frac{\alpha}{2}}(\bar\Omega\times [0,T]): 0\leq w(x,t)\leq \frac{a}{2b}-\varepsilon_0,\forall x\in\overline{\Omega},t\geq0\}.
$$
We define an operator in the following way: for given $w\in X$ and $\sigma\in[0,1]$, let $u :=\mathcal T (w,\sigma)$ be the $C^{2+\alpha,1+\frac{\alpha}{2}}(\bar{\Omega}\times [0,T])$  solution (see \cite{LSU}, Chapter V, Theorem 7.4 for the existence and uniqueness of the solution) of the following problem
\begin{align}\label{defT}
 \left\{ \begin{array}{ll}
  \partial_t u -(1-\sigma)\Delta u-\sigma(a-2bw)\Delta u+2\sigma b|\nabla u|^2
  =\sigma u (c-du)
 & \text{in } \Omega\times(0,T),  \\
         \mathcal{B}[u]=0
 & \text{on } \partial\Omega \times(0,T),\\
        u(x,0)=\sigma u_0(x).
        \end{array} \right.
\end{align}
In order to build up the map $\mathcal T$, we have to show that $0\leq u(x,t)\leq  \frac{a}{2b}-\varepsilon_0$, $\forall (x,t)\in\bar{\Omega}\times[0,T]$.

For $\sigma=0$, it is obvious that $0\leq u\leq \frac{a}{2b}-\varepsilon_0$, since in that case $u$ satisfies the heat equation.
\par
For $\sigma\in(0,1]$, we first prove that $u\geq 0$.

Let $\varepsilon>0$ be small and $u^\varepsilon$ be the solution of
$$
\left\{  \begin{array}{ll}
  \partial_t u^\varepsilon -(1-\sigma)\Delta u^\varepsilon-\sigma(a-2bw)\Delta u^\varepsilon+2\sigma b|\nabla u^\varepsilon|^2
  =\sigma u^\varepsilon (c-du^\varepsilon)+\varepsilon
 & \text{in } \Omega\times(0,T),  \\
         \mathcal{B}[u^\varepsilon]=0
 & \text{on } \partial\Omega \times(0,T),\\
        u^\varepsilon(x,0)=\sigma u_0(x).
        \end{array} \right.
$$
The solution $u^\varepsilon\in C^{2+\alpha,1+\frac{\alpha}{2}}(\overline{\Omega}\times[0,T])$ possesses a uniform in $\varepsilon$ estimate $\|u^\varepsilon\|_{2+\alpha,1+\frac{\alpha}{2}}\leq C$, see \cite{LSU}.
With $u_0\geq 0$, if $\min_{\bar\Omega\times [0,T]}u^\varepsilon(x,t)< 0$, then $\exists (x_1,t_1)\in {\overline{\Omega}}\times (0,T]$ such that
\begin{equation}\label{min}
0 = u^\varepsilon(x_1,t_1)=\min_{\bar\Omega} u^\varepsilon(x,t_1)\quad \mbox{ with } \quad \partial_tu^\varepsilon(x_1,t_1)\leq 0.
\end{equation}
More precisely we take here $t_1\geq 0$ as the last time before the solution takes some negative value. If $x_1\in \Omega$, we have $\nabla u^\varepsilon(x_1,t_1)=0$. If $x_1\in\partial\Omega$, in the case of the homogeneous Neumann boundary condition, we also have $\nabla u^\varepsilon(x_1,t_1)=0$.
Then we get (still in the case of Neumann boundary condition).
$$
0\geq \Big(\partial_t u^\varepsilon -(1-\sigma)\Delta u^\varepsilon-\sigma(a-2bw)\Delta u^\varepsilon+2\sigma b|\nabla u^\varepsilon|^2\Big)\Big|_{(x_1,t_1)}=\sigma u^\varepsilon(c-du^\varepsilon)|_{(x_1,t_1)}+\varepsilon >0,
$$
which is a contradiction. Therefore, $u^\varepsilon\geq 0$.

If $x_1\in\partial\Omega$ (and $u^\varepsilon(x,t_1) >0$ for all $x \in \Omega$), in the case of the homogeneous Dirichlet boundary condition, we can prove (see the sequel of the proof) that there exists a sequence $x_n\in\Omega$ satisfying $x_n\rightarrow x_1$ and such that 
\begin{equation}\label{minnew}
	\lim_{n\rightarrow\infty}\frac{u^\varepsilon(x_n,t_1)-u^\varepsilon(x_1,t_1)}{|x_n-x_1|}=\lim_{n\rightarrow\infty}\frac{u^\varepsilon(x_n,t_1)}{|x_n-x_1|}=0.
\end{equation}
This limit, together with the fact that $u^\varepsilon$ is smooth and that the tangential derivative of $u^\varepsilon$ vanishes because of the homogeneous Dirichlet boundary condition, shows that $\nabla u^\varepsilon|_{(x_1,t_1)}=0$. Thus we can follow the same argument as above. 
\\
In order to show the limit (\ref{minnew}), we consider
$t_n=t_1+\frac{1}{n}$ a sequence that converges to $t_1$ and $x_n$ ($n\ge 2$) one of the minimal points of $u^\varepsilon(x,t_n)$ such that $x_n\rightarrow x_1$ (note that if several points $x_1 \in \partial\Omega$  satisfy (\ref{min}), one at least can be selected in such a way that the construction above makes sense). Then $u^\varepsilon(x_n,t_n)\rightarrow u^\varepsilon(x_1,t_1)$ because of the continuity of $u^\varepsilon$. We get therefore
$$
u^\varepsilon(x_n,t_1)>0\mbox{ and } u^\varepsilon(x_n,t_n)<0. 
$$
Then, the mean value theorem implies that there exists a sequence $t_n^*\in (t_1,t_n)$ such that $u^\varepsilon(x_n,t_n^*)=0$. Therefore,
\begin{eqnarray*}
&&\lim_{n\rightarrow\infty}\frac{u^\varepsilon(x_n,t_1)-u^\varepsilon(x_1,t_1)}{|x_n-x_1|}=\lim_{n\rightarrow\infty}\frac{u^\varepsilon(x_n,t_1)-u^\varepsilon(x_n,t_n^*)}{|x_n-x_1|}\\
&=&\lim_{n\rightarrow\infty}\frac{1}{|x_n-x_1|}\frac{\partial u^\varepsilon}{\partial t}(x_n,t_n^{**})(t_1-t_n^*)\\
&=& \lim_{n\rightarrow\infty}\frac{1}{|x_n-x_1|}\Big(\frac{\partial u^\varepsilon}{\partial t}(x_n,t_n^{**})-\frac{\partial u^\varepsilon}{\partial t}(x_1,t_n^{**})\Big)(t_1-t_n^*),
\end{eqnarray*}
where $t^{**}_n\in (t_1,t^*_n)$ and the last line comes again from the homogeneous Dirichlet boundary condition. As a consequence,  the function in the limit is bounded by $\frac{1}{n}\|u^\varepsilon\|_{W^{2,\infty}}$, which implies that the limit vanishes.

\medskip

 On the other hand, $\rho^\varepsilon=u^\varepsilon-u$ satisfies the following linear problem
$$
 \left\{ \begin{array}{ll}
  \partial_t \rho^\varepsilon -(1-\sigma)\Delta \rho^\varepsilon-\sigma(a-2bw)\Delta \rho^\varepsilon+2\sigma b\nabla(u^\varepsilon+u)\cdot\nabla\rho^\varepsilon&\\
  \hspace{7cm}-\sigma c\rho^\varepsilon +\sigma d (u^\varepsilon+u)\rho^\varepsilon=\varepsilon
 & \text{in } \Omega\times(0,T),  \\
         \mathcal{B}[\rho^\varepsilon]=0
 & \text{on } \partial\Omega \times(0,T),\\
        \rho^\varepsilon(x,0)=0,
        \end{array} \right.
$$
where all the coefficients are uniformly bounded in $\varepsilon$. Therefore by the maximum principle, we have
$$
\|u^\varepsilon-u\|_{L^\infty(\Omega\times[0,T])}=\|\rho^\varepsilon\|_{L^\infty(\Omega\times[0,T])}\leq C_T\,\varepsilon.
$$
By taking the limit $\varepsilon\rightarrow 0$ in $u^\varepsilon\geq 0$, we obtain that $u\geq 0$ in $\Omega\times[0,T]$.

Next we prove that $u\leq\frac{a}{2b}-\varepsilon_0$ in $\Omega\times[0,T]$.
Suppose that $\exists (x_0,t_0)\in {\overline{\Omega}}\times (0,T]$ such that
$$
\frac{a}{2b}-\varepsilon_0 < u(x_0,t_0)=\max_{\bar\Omega\times [0,T]} u(x,t).
$$
Then we have $\pa_t u(x_0,t_0) \ge 0$, 
and moreover $x_0 \in \Omega$ if we consider the Dirichlet boundary condition, so that
$$
0\leq \Big(\partial_t u -(1-\sigma)\Delta u-\sigma(a-2bw)\Delta u+2\sigma b|\nabla u|^2\Big)\Big|_{(x_0,t_0)}=\sigma u(c-du)|_{(x_0,t_0)},
$$
which means
$$
c-du(x_0,t_0)\geq 0\quad \Rightarrow\quad u(x_0,t_0)\leq \frac{c}{d}.
$$
In the case of the Neumann boundary conditions,  $x_0$ might appear on the boundary, but in this case, we still have $\nabla u(x_0,t_0)=0$, therefore the above argument also works.
This implies $\frac{a}{2b}-\varepsilon_0<\frac{c}{d}$, which is a contradiction with the assumption $\frac{a}{2b}-\varepsilon_0\geq \frac{c}{d}$. Therefore $\max_{\bar{\Omega}\times[0,T]}u(x,t)\leq\frac{a}{2b}-\varepsilon_0$.

Thus the map $\mathcal{T}:X\times[0,1]\rightarrow X$ is well defined. Due to the compact embedding from $C^{2+\alpha,1+\frac{\alpha}{2}}$ to $C^{\alpha,\frac{\alpha}{2}}$, we know that $\mathcal T(\cdot,\sigma): X\rightarrow X$ is a compact operator.

Next we show that the map $\mathcal T$ is continuous in $w$ and $\sigma$. For all $ w\in X$ and $\sigma\in[0,1]$, let $w_j\in X$ be a sequence such that $\|w_j-w\|_{C^{\alpha,\frac{\alpha}{2}}}\rightarrow 0$ as $j\rightarrow \infty$, and $\sigma_j\in [0,1]$ be a sequence such that $|\sigma_j-\sigma|\rightarrow 0$. Let $u_j=\mathcal{T}(w_j,\sigma_j)$, the Schauder estimates show that $\|u_j\|_{2+\alpha,1+\frac{\alpha}{2}}\leq C$ uniformly in $j$. Notice that $\rho_j=u_j-u$ satisfies the following linear problem
$$
\left\{  \begin{array}{ll}
  \partial_t \rho_j -(1-\sigma)\Delta \rho_j-\sigma(a-2bw)\Delta \rho_j+2\sigma b\nabla(u_j+u)\cdot\nabla\rho_j\\
  \hspace{5cm}+\sigma du\rho_j-\sigma (c-du_j)\rho_j=F_j
 & \text{in } \Omega\times(0,T),  \\
         \mathcal{B}[\rho_j]=0
 & \text{on } \partial\Omega \times(0,T),\\
        \rho_j(x,0)=0,
        \end{array} \right.
$$
where
\begin{eqnarray*}
F_j&=&(\sigma-\sigma_j)\Delta u_j-2\sigma b (w_j-w)\Delta u_j + (\sigma_j-\sigma)(a-2bw_j)\Delta u_j\\
&&\hspace{4cm}-2b(\sigma-\sigma_j)|\nabla u_j|^2 +(\sigma_j-\sigma) u_j (c-du_j).
\end{eqnarray*}
Using Schauder's theory for linear parabolic equations, we get the estimate
$$
\|u_j-u\|_{2+\alpha,1+\frac{\alpha}{2}}=\|\rho_j\|_{2+\alpha,1+\frac{\alpha}{2}}\leq C\|F_j\|_{\alpha,\frac{\alpha}{2}}\leq C(\|w_j-w\|_{\alpha,\frac{\alpha}{2}}+|\sigma_j-\sigma|).
$$
Hence, $\mathcal{T}$ is continuous in $w$ and $\sigma$.

Furthermore, it is obvious that $\mathcal T(w,0)=0$. Additionally, for any fixed point of $\mathcal T(u,\sigma)=u$, the uniform estimates for quasilinear parabolic equation show (\cite{LSU}, Chapter V, Theorem 7.2) that there exists a constant $M$ depending only on $\frac{a}{2b}$, $c$, $\|u_0\|_\infty$ such that
$$
\|\nabla u\|_{L^\infty},\|\partial_t u\|_{L^\infty}\leq M.
$$
Therefore, by Leray-Schauder's fixed point theorem, there exists a fixed point to the map $\mathcal{T}(\cdot,1)$, i.e. $u$ is a solution of the following problem:
\begin{align}
 \left\{ \begin{array}{ll}
  \partial_t u - (a - 2b\,u)\,\Delta u + 2b\,|\nabla u|^2
  = u(c-du)
 & \text{in } \Omega\times(0,T),  \\
         \mathcal{B}[u]=0
 & \text{on } \partial\Omega \times(0,T),\\
        u(x,0)=u_0(x),
        \end{array} \right.
\end{align}
which is equivalent to eq. (\ref{cdp0}).
\medskip

The uniqueness of classical solutions follows directly from comparison principles.
\end{proof}


\section{Steady states}
In this section, two results concerning  stationary states are given. One of them shows that nontrivial nonnegative solutions do not exist for eq. (\ref{cdp0}) with Dirichlet boundary condition. The other one has to do with the linear instability of constant steady states to eq. (\ref{cdp0}) with homogeneous Neumann boundary condition.

\subsection{Non-existence of non-trivial steady states for Dirichlet boundary conditions}

The steady states corresponding to \eqref{cdp0} satisfy the equation
\begin{align}\label{cdp0ssPoh}
  \left\{\begin{array}{ll}
- \Delta [(a-b u)u]=(c-d u)u
 & \text{in } \Omega,  \\
u=0
 & \text{on } \partial\Omega.
        \end{array}
\right.
\end{align}
We write down a non-existence result (based on Pohozaev method, see \cite{KuPo})  which works for general elliptic problems, and explain how to use it specifically in the case of eq. (\ref{cdp0ssPoh}).

\begin{thm}
Let  $n>2$, $\Omega$ be a star shaped domain of $\R^n$ with respect to the origin and suppose that $g,h$ are $C^1$ functions defined on $\R^+$ such that
\begin{equation}\label{genco}
0\leq n\int_0^sg(v)h'(v)\;dv
<
\frac{n-2}{2}g(s)h(s),\quad\forall s\geq0 .
\end{equation}
Then, the problem
\begin{align}\label{cdp0ssPohGen}
  \left\{\begin{array}{ll}
- \Delta h(u)=g(u)
 & \text{in } \Omega,  \\
       h(u)=0
 & \text{on } \partial\Omega,
        \end{array}
\right.
\end{align}
does not have any non-trivial (that is, different from $u \equiv 0$), nonnegative classical solutions.

As a consequence, considering $h(s) := (a-b s)\,s,\ g(s) := (c-d s)\,s$,  the sufficient condition (\ref{genco}) implies  non-existence of (non-trivial, nonnnegative, classical) solutions to eq. (\ref{cdp0ssPoh}) as soon as
$$
bds^2
+\left(
\frac{n-6}{6}ad-\frac{n+6}{6}bc
\right)s
+ac\leq0,\quad \forall s\geq0.
$$
This last condition is satisfied in particular when $c,d\leq0$ and
$
0<ad(n-6)-bc(n+6)\leq12\sqrt{abcd}
$
or
$
ad(n-6)-bc(n+6)\leq0
$. Note also that since $h(0)=0$, the homogeneous Dirichlet boundary condition $u=0$ on $\partial\Omega$ implies that $h(u)=0$
on $\partial\Omega$, so that Theorem 2 can be applied.
\end{thm}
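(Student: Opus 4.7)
The natural approach is a Pohozaev-type identity for the equation $-\Delta w = g(u)$ with $w := h(u)$. First I would multiply this equation by the generator of dilations, namely $x\cdot\nabla w$, and integrate over $\Omega$. Two integrations by parts on the Laplacian term yield
\[
-\int_\Omega \Delta w\,(x\cdot\nabla w)\,dx \;=\; \Bigl(1-\tfrac{n}{2}\Bigr)\int_\Omega |\nabla w|^2\,dx \;-\; \tfrac{1}{2}\int_{\partial\Omega} (x\cdot\gamma)(\partial_\gamma w)^2\,dS,
\]
where I use that $w=h(u)=0$ on $\partial\Omega$ (guaranteed by $h(0)=0$ and $u=0$ on the boundary), so that $\nabla w$ is purely normal there. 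For the right-hand side I would write $g(u)(x\cdot\nabla w)=g(u)h'(u)(x\cdot\nabla u)=x\cdot\nabla G(u)$ with $G(s):=\int_0^s g(v)h'(v)\,dv$, and integrate by parts once more; the boundary term vanishes since $G(0)=0$ and $u=0$ on $\partial\Omega$, leaving $-n\int_\Omega G(u)\,dx$.

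Next I would run a second energy identity: multiplying $-\Delta w=g(u)$ by $w=h(u)$ and integrating (again the boundary term drops because $h(u)|_{\partial\Omega}=0$) gives $\int_\Omega |\nabla w|^2\,dx=\int_\Omega g(u)h(u)\,dx$. Substituting into the Pohozaev identity above and rearranging, I arrive at the key identity
\[
\int_\Omega\!\Bigl[\,nG(u)-\tfrac{n-2}{2}g(u)h(u)\Bigr]\,dx \;=\; \tfrac{1}{2}\int_{\partial\Omega}(x\cdot\gamma)(\partial_\gamma w)^2\,dS.
\]
Hypothesis \eqref{genco} forces the left-hand side to be $\leq 0$ pointwise, while the star-shapedness of $\Omega$ with respect to the origin gives $x\cdot\gamma\geq 0$ on $\partial\Omega$, so the right-hand side is $\geq 0$. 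Both sides must therefore vanish, and by the strict inequality in \eqref{genco} (valid for $s>0$) the integrand on the left can only vanish where $u=0$; hence $u\equiv 0$.

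For the application to \eqref{cdp0ssPoh} I would simply substitute $h(s)=(a-bs)s$ and $g(s)=(c-ds)s$ into condition \eqref{genco}. A direct computation of $G(s)=\tfrac{ac}{2}s^2-\tfrac{2bc+ad}{3}s^3+\tfrac{bd}{2}s^4$ and of $g(s)h(s)=ac\,s^2-(ad+bc)s^3+bd\,s^4$, followed by collecting powers of $s$, produces precisely the quartic inequality stated, with $s^2$ factored out. The algebraic analysis of when this quartic is nonpositive on $\R^+$ — essentially the discriminant/sign analysis — then yields the stated sufficient conditions on $a,b,c,d$. I do not expect any real obstacle: the only technical point is the treatment of the boundary terms when $h(u)$ rather than $u$ vanishes on $\partial\Omega$, which is handled cleanly because $u=0$ (and hence $G(u)=0$) on $\partial\Omega$ under the imposed Dirichlet condition, as noted in the statement.
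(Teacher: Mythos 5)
Your argument is the same Pohozaev-type computation as in the paper: testing $-\Delta h(u)=g(u)$ with $x\cdot\nabla h(u)$, combining with the energy identity $\int_\Omega|\nabla h(u)|^2=\int_\Omega g(u)h(u)$, and using star-shapedness to control the sign of the boundary term $\tfrac12\int_{\partial\Omega}(x\cdot\gamma)(\partial_\gamma h(u))^2\,d\sigma$; the resulting identity is exactly \eqref{PohIdent}, and the algebra for $h(s)=(a-bs)s$, $g(s)=(c-ds)s$ matches the paper's.

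One point of divergence deserves attention: you discard the other boundary term, $\int_{\partial\Omega}(x\cdot\gamma)\,G(u)\,d\sigma$ with $G(s)=\int_0^s g(v)h'(v)\,dv$, by invoking $u=0$ on $\partial\Omega$. That is legitimate for the application to \eqref{cdp0ssPoh}, but the general problem \eqref{cdp0ssPohGen} only imposes $h(u)=0$ on $\partial\Omega$, which does not force $u=0$ (nor $G(u)=0$) there, since $h$ may have zeros other than $s=0$. The paper keeps this term in \eqref{PohIdent} and disposes of it using the \emph{left} inequality in \eqref{genco}, namely $G(s)\geq 0$, together with $x\cdot\gamma\geq 0$ from star-shapedness, so that the term has the favorable sign; note that your proof never uses that part of hypothesis \eqref{genco}, which is precisely the clue that this step needs it in the general setting. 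With that adjustment (or by restricting to the boundary condition $u=0$, as in the application), your proof is correct and coincides with the paper's.
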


\begin{proof}
By testing \eqref{cdp0ssPohGen} with $x\cdot\nabla h(u)$, we get
\begin{align}\label{poh0}
-\int_\Omega (x\cdot\nabla h(u)) \Delta h(u)\;dx
=
\int_\Omega (x\cdot\nabla h(u))g(u)\;dx.
\end{align}
First, notice that
\begin{align}
\nabla\cdot \left[(x\cdot\nabla h(u))\nabla h(u)\right]&=
(x\cdot\nabla h(u))\Delta h(u)
+\sum_k \left(\frac{\partial}{\partial x_k} [h(u)]\right)\frac{\partial}{\partial x_k}\left(\sum_ix_i\frac{\partial h(u)}{\partial x_i}\right)
\nonumber\\
&=(x\cdot\nabla h(u))\Delta h(u)
+|\nabla [h(u)]|^2
+\frac12\sum_i x_i\frac{\partial}{\partial x_i}|\nabla h(u)|^2.
\end{align}
Integrating over $\Omega$ and applying the divergence Lemma to the left hand side, we get
\begin{align}
	\int_{\partial \Omega }(x\cdot\nabla h(u))(\nabla h(u)\cdot\nu )\;d\sigma
&=
\int_\Omega (x\cdot\nabla h(u))\Delta h(u)\;dx
+\int_\Omega |\nabla h(u)|^2\;dx\nonumber\\
&+ \frac12\int_\Omega \sum_i x_i\frac{\partial}{\partial x_i}|\nabla h(u)|^2\;dx ,\nonumber
\end{align}
so that using \eqref{poh0},  we obtain
\begin{align}\label{poh1}
\int_{\partial \Omega }&(x\cdot\nabla h(u))(\nabla h(u)\cdot\nu )\;d\sigma
=\nonumber\\
&=-\int_\Omega (x\cdot\nabla h(u))g(u)\;dx
+\int_\Omega |\nabla h(u)|^2\;dx
+ \frac12\int_\Omega \sum_i x_i\frac{\partial}{\partial x_i}|\nabla h(u)|^2\;dx\\
&=: I+II+III.\nonumber
\end{align}
We first compute:
$$
I=-\int_\Omega (x\cdot\nabla h(u))g(u)\;dx
=
-\int_\Omega g(u)h'(u)\sum_ix_iu_{x_i}\;dx
$$
$$
=-\int_\Omega\sum_ix_i\frac{\partial F(u)}{\partial x_i}\;dx,
$$
where $F(u)=\int_0^ug(s)h'(s)\;ds$.
Integrating by parts, we get
$$
-\int_\Omega\sum_ix_i\frac{\partial F(u)}{\partial x_i}\;dx
=
n\int_\Omega F(u)\;dx
-\int_{\partial\Omega}(x\cdot\nu)F(u)\;dx,
$$
thus
\begin{equation}\label{PohI}
	I=
	n\int_\Omega F(u)\;dx
-\int_{\partial\Omega}(x\cdot\nu)F(u)\;d\sigma.
\end{equation}
For the second term, using problem \eqref{cdp0ssPohGen}, we get:
\begin{equation}\label{PohII}
II=\int_\Omega |\nabla h(u)|^2\;dx=\int_\Omega g(u)h(u)\;dx.
\end{equation}

For the last term in \eqref{poh1}, we compute
$$
\nabla \cdot (\frac{x}{2}|\nabla h(u)|^2)
=\frac{n}{2}|\nabla h(u)|^2
+\frac12\sum_{i} x_i\frac{\partial}{\partial x_i}|\nabla h(u)|^2,
$$
so that
\begin{eqnarray}
III
=\int_\Omega \frac12\sum_i x_i\frac{\partial}{\partial x_i}|\nabla h(u)|^2\;dx
&=&
\int_\Omega
\left(\mathrm{div}(\frac{x}{2}|\nabla h(u)|^2)-\frac{n}{2}|\nabla h(u)|^2\right)\;dx
\nonumber\\
&=&
\frac12\int_{\partial\Omega}x\cdot\nu|\nabla h(u)|^2\;d\sigma
-\frac{n}{2}\int_\Omega g(u)h(u)\;dx .
\label{PohIII}
\end{eqnarray}
Plugging \eqref{PohI},\eqref{PohII},\eqref{PohIII} into \eqref{poh1}, we obtain
\begin{align*}
\int_{\partial \Omega }(x\cdot\nabla h(u))(\nabla h(u)\cdot\nu )\;d\sigma
&=
 I+II+III
 \nonumber\\
&=
n\int_\Omega F(u)\;dx
-\int_{\partial\Omega}(x\cdot\nu)F(u)\;dx
+\int_\Omega g(u)h(u)\;dx
\nonumber\\
&+\frac12\int_{\partial\Omega}x\cdot\nu|\nabla h(u)|^2\;d\sigma
-\frac{n}{2}\int_\Omega g(u)h(u)\;dx .
\end{align*}
 Using the Dirichlet boundary condition, we see that $|\nabla h(u)|_{\partial\Omega}=|\nu\cdot\nabla h(u)|_{\partial\Omega}$, so that on $\partial\Omega$, we have $(x\cdot\nabla [h(u)])(\nu\cdot\nabla [h(u)])=(x\cdot\nu)|\nabla h(u)|^2$. Thus, the above relation becomes
\begin{align*}
\frac12\int_{\partial\Omega}(x\cdot\nu) |\nabla h(u)|^2 d\sigma&=
\int_{\partial \Omega }(x\cdot\nabla h(u))(\nabla h(u)\cdot\nu )d\sigma
-\frac12\int_{\partial\Omega} (x\cdot\nu) |\nabla h(u)|^2d\sigma=
\\
&=n\int_\Omega F(u)dx
-\int_{\partial\Omega}(x\cdot\nu)F(u)\;dx
-\frac{n-2}{2}\int_\Omega g(u)h(u), \;
\end{align*}
or
\begin{equation}
\frac12\int_{\partial\Omega}(x\cdot\nu) |\nabla h(u)|^2\;d\sigma
+\int_{\partial\Omega}(x\cdot\nu)F(u)\;dx
=
n\int_\Omega F(u)\;dx
-\frac{n-2}{2}\int_\Omega g(u)h(u)\;dx.
	\label{PohIdent}
\end{equation}

Since $\Omega$ is star shaped, there exists $\alpha\geq0$ such that
$$
x\cdot\nu\geq \alpha\int_{\partial\Omega}d\sigma\geq0,
$$
and relation \eqref{PohIdent} yields
\begin{equation*}
n\int_\Omega F(u)\;dx
-\frac{n-2}{2}\int_\Omega g(u)h(u)\;dx
\geq0.
\end{equation*}
Therefore, a sufficient condition for the non-existence of (non-trivial, nonnegative, classical) solutions is
\begin{equation}
n\int_0^sg(v)h'(v)\;dv
<
\frac{n-2}{2}g(s)h(s),\quad\forall s\geq0.
	\label{PohIneq}
\end{equation}

Next, if we set $h(s)=(a-b s)\,s,\ g(s)=(c-d s)\,s$,  then
$$ F(u)=
\frac{bd}{2}u^4-\frac{ad+2bc}{3}u^3+\frac{ac}{2}u^2, $$
$$ g(u)h(u)= bdu^4-(ad+bc)u^3+acu^2. $$
 Using \eqref{PohIneq}, we get the sufficient condition of non-existence of solutions to the corresponding steady-state problem, which consists in finding $a,b,c,d,n$ such that
$$
bd\, s^2
+\left(
\frac{n-6}{6}ad-\frac{n+6}{6}bc
\right)s
+ac\leq0,\quad \forall s\geq0.
$$
As stated in the theorem, this happens  when for example, $c,d\leq0$ and
$
ad(n-6)-bc(n+6)\leq12\sqrt{abcd}.
$

\end{proof}
\begin{rem}

We obtain \eqref{PohIneq} from \eqref{PohIdent} by neglecting the first boundary term (since $\Omega$ is starshaped). If now we keep this first boundary integral (the second boundary integral in \eqref{PohIdent} is 0 because of the boundary conditions) and compute:
\begin{align*}
\frac12\int_{\partial\Omega}(x\cdot\nu) |\nabla h(u)|^2\;d\sigma
&\geq
\frac{\alpha}{2}
\left(
\int_{\partial\Omega} -\frac{\partial h(u)}{\partial\nu}\;d\sigma
\right)^2
=
\frac{\alpha}{2}
\left(
\int_{\Omega} -\Delta h(u)\;dx
\right)^2\\
&=
\frac{\alpha}{2}
\left(
\int_{\Omega} g(u)\;dx
\right)^2 ,
\end{align*}
where we have used the geometry of the domain, Cauchy-Schwarz inequality, $|\partial\Omega|=1$, the divergence Lemma and the problem itself. Then,
relation \eqref{PohIdent} yields
\begin{equation*}
n\int_\Omega F(u)\;dx
-\frac{n-2}{2}\int_\Omega g(u)h(u)\;dx
\geq
\frac{\alpha}{2}
\left(
\int_{\Omega} g(u)\;dx
\right)^2.
\end{equation*}
Therefore, we can get a more precise description for the non-existence of solutions, since now we need to check the less stringent inequality:
\begin{equation}
n\int_\Omega F(u)\;dx
-\frac{n-2}{2}\int_\Omega g(u)h(u)\;dx
<
\frac{\alpha}{2}
\left(
\int_{\Omega} g(u)\;dx
\right)^2.
	\label{PohIneq2}
\end{equation}	
\end{rem}


\begin{rem}
We present here a computation related to the linear stability of steady states for the Neumann boundary condition.
We denote by $\{\lambda_k,e^k\}_{k=1}^\infty$  the solution of the eigenvalue problem for the Laplacian with homogeneous Neumann boundary condition,
with $\lambda_k\geq0$ for $k=1,2,\ldots$ and $0 =\lambda_1<\lambda_2\leq\lambda_3\leq\ldots$.
\medskip

We assume  that $c,d>0$. Then
the equilibrium $\frac{c}{d}$ for eq. (\ref{cdp0}) with homogeneous Neumann boundary condition is asymptotically linearly stable if and only if
 $ \frac{c}{d} \leq \frac{a}{2b}
  $.


Indeed we set
$u=\frac{c}{d}+\varepsilon w$, so that  the problem is transformed into
$$
\varepsilon\partial_t w
-\Delta \bigg[(a-b\frac{c}{d}-b\varepsilon w)(\frac{c}{d}+\varepsilon w) \bigg]
=
\bigg(c-d(\frac{c}{d}+\varepsilon w)\bigg)\,\bigg(\frac{c}{d}+\varepsilon w \bigg)
$$
$$ = - \var\, d\, w \bigg(\frac{c}{d}+\varepsilon w \bigg).$$
Thus
$$
\partial_t w
-\Delta \bigg[(a-\frac{2bc}{d}) w \bigg]
=
- cw
+O(\varepsilon).
$$
By projecting the equation onto the $k$-th eigenspace (and by using the notation $w_k(t) := \left<w(t,\cdot),e^k\right>$),
we obtain
$$
\frac{d}{dt} w^k
=
 \left((\frac{2bc}{d}-a)\lambda_k -c\right)w^k
+O(\varepsilon).
$$
The condition for linear asymptotic stability of the steady state $\frac{c}{d}$ is therefore, for all $k \in \N$,
$$
\left(\frac{2bc}{d}-a\right)\lambda_k-c<0,
$$
whence the result.



\end{rem}


\section{Blow up results}

In this section, we present blow up results (for different boundary conditions). Namely, we show that the solution to eq. (\ref{cdp0}) blows up, under appropriate conditions, for both Dirichlet and Neumann boundary conditions,
by using two different classical methods i.e. Kaplan's and concavity method. 

\subsection{A sufficient blow-up condition via Kaplan's method for Dirichlet boundary conditions}
The problem under consideration in this subsection is
\begin{align}\label{cdp0D}
  \left\{\begin{array}{ll}
  \partial_t u
  =  \Delta [(a-b u)u]+(c-d u)u
 & \text{in } \Omega\times(0,T),  \\
u=0,
 & \text{on } \partial\Omega \times(0,T),\\
u(x,0)=u_0(x)\geq 0.
        \end{array}
\right.
\end{align}

Let $(\mu,\phi)$ be the solution to the eigenvalue problem:
\begin{subequations}
	\label{Dir aux}
	\begin{equation}
 \; - \Delta\phi= \mu\phi,\quad x\in
\Omega,\label{Dir aux1}
	\end{equation}
	 	\begin{equation}
\phi=0,\quad x\in\partial\Omega ,
	 	\end{equation}
\end{subequations}
where $\mu$ is the first eigenvalue and $\Omega$ is a connected bounded domain.
Then
$\mu>0$ and $\phi$
is strictly positive and bounded in $\Omega$. For convenience, we also impose
the normalization condition
$\int_{\Omega}\phi(x)\, dx=1$. The main result in this subsection is
\begin{thm}\label{BUKaplan}
	Assume that $\Omega$ is a bounded smooth domain of $\R^n$ and let $u_0\in L^1(\Omega;\phi\, dx)$ satisfy
	$$
	A_0 := \int_\Omega u_0\,\phi>\frac{\max\{\mu a-c,0\}}{\mu b-d}\quad\text{and}\quad \mu b>d,
	$$
	where $(\mu,\phi)$ is the solution to \eqref{Dir aux}. Then any nonnegative  solution to  problem \eqref{cdp0D} blows up in finite time in $L^1(\Omega;\phi dx)$.
\end{thm}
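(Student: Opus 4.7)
The plan is to carry out Kaplan's method with the weight $\phi$ and then reduce the problem to a scalar Bernoulli-type differential inequality. Define
\[
A(t) := \int_\Omega u(x,t)\,\phi(x)\,dx,
\]
so that $A(0) = A_0$. The first step is to multiply the PDE \eqref{cdp0D} by $\phi$ and integrate over $\Omega$. For the diffusion term, I apply Green's identity twice: both boundary integrals vanish, one because $\phi = 0$ on $\partial\Omega$, and the other because $u = 0$ (hence $(a-bu)u = 0$) on $\partial\Omega$. Using $-\Delta\phi = \mu\phi$, this yields
\[
\int_\Omega \phi\,\Delta[(a-bu)u]\,dx \;=\; \int_\Omega (a-bu)u\,\Delta\phi\,dx \;=\; -\mu a\,A(t) + \mu b\int_\Omega u^2\phi\,dx.
\]
Combined with the reaction term, I obtain the identity
\[
\frac{dA}{dt} \;=\; (c-\mu a)\,A(t) \;+\; (\mu b - d)\int_\Omega u^2\,\phi\,dx.
\]

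The second step is to apply Jensen's inequality with respect to the probability measure $\phi\,dx$ (which is a probability measure thanks to the normalization $\int\phi=1$ and $\phi \geq 0$): $\int_\Omega u^2\phi\,dx \geq A(t)^2$. Since by hypothesis $\mu b - d > 0$, this gives the scalar differential inequality
\[
\frac{dA}{dt} \;\geq\; \alpha\,A(t)^2 + \beta\,A(t), \qquad \alpha := \mu b - d > 0,\quad \beta := c - \mu a.
\]

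The third step is to extract finite-time blow-up from this inequality. The hypothesis $A_0 > \max\{\mu a - c,0\}/(\mu b - d)$ is precisely what guarantees that $\alpha A_0 + \beta > 0$ and $A_0 > 0$, so that $\dot A(0) > 0$. Then $A$ is nondecreasing, hence $A(t) \geq A_0$ as long as the solution exists, and consequently $\alpha A(t) + \beta \geq \alpha A_0 + \beta > 0$ throughout. Comparison with the explicitly solvable Bernoulli ODE $\dot y = \alpha y^2 + \beta y$ (solved by the substitution $z = 1/y$, giving $\dot z = -\beta z - \alpha$) shows that $y$, and therefore $A(t)$, blows up by a time no later than
\[
T^* \;=\; \frac{1}{\alpha A_0 + \beta}\quad\text{if $\beta\geq 0$,}\qquad
T^* \;=\; \frac{1}{-\beta}\ln\!\frac{\alpha A_0}{\alpha A_0 + \beta}\quad\text{if $\beta<0$,}
\]
both finite under the standing hypothesis.

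I expect the main technical point to be justifying the integration by parts rigorously for whichever solution concept is in force — in particular, checking that $A(t)$ is well defined and differentiable and that $u$ vanishes on $\partial\Omega$ with enough regularity to make both boundary terms disappear. Once a standard approximation/regularization argument disposes of this, the remainder is a direct ODE comparison. Note also that the other hypothesis $\mu b > d$ is essential: it is what allows the aggregation/diffusion contribution to cooperate with Jensen's inequality rather than fight against it, turning the linear-in-$A$ dissipation into a superlinear feedback.
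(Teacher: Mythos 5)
Your proof is correct and follows essentially the same route as the paper's: test with the first Dirichlet eigenfunction, use $-\Delta\phi=\mu\phi$ and the vanishing boundary terms, apply Jensen's inequality for the probability measure $\phi\,dx$, and conclude via a Bernoulli-type ODE comparison, with the hypothesis $A_0>\max\{\mu a-c,0\}/(\mu b-d)$ ensuring $\alpha A_0+\beta>0$. One small quibble: in the case $\beta\ge 0$ your stated bound $T^*=\frac{1}{\alpha A_0+\beta}$ is actually \emph{smaller} than the comparison ODE's blow-up time $\frac{1}{\beta}\ln\frac{\alpha A_0+\beta}{\alpha A_0}$ (the paper's $t^*$), so it is not justified by the comparison argument — a correct simple bound would be $\frac{1}{\alpha A_0}$ obtained by dropping $\beta A\ge 0$ — but since the theorem only asserts finite-time blow-up, this does not affect the validity of the proof.
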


\begin{rem}
When $c=d=0$, the above blow up condition on the initial data can be roughly translated as $ u_0 > Cst\,\frac{a}{b}$, which is coherent with our global existence result, and with the assumption of Theorem \ref{BUConcavity}. Note also that the homogeneous Dirichlet boundary condition $u=0$ on $\partial\Omega$ could be replaced in the theorem above by the less stringent condition $(a-bu)\,u=0$ on $\partial\Omega$.
\end{rem}

\begin{proof}

We begin, motivated by \cite{Kap63},  with testing (\ref{cdp0D}) with the eigenfunction $\phi$,
 and set $$A(t)=\int_{\Omega}\phi(x) \,u(x,t)\, dx ,$$
so that
\begin{align}
 A'(t)
  &=
  \int_\Omega \phi \Delta[(a-b u)u] dx
  +\int_\Omega (c-d u)u\phi dx
    \nonumber\\
      &=
  -\mu\int_\Omega (a-b u)u\phi dx
  +\int_\Omega (c-d u)u\phi dx
    \nonumber\\
  &=
 (c-\mu a)\int_\Omega u\phi
+(\mu b-d)\int_\Omega u^2\phi ,
\end{align}
where we have used problem \eqref{Dir aux}. Next we recall that
\begin{equation}\label{cond}
\mu b>d.
\end{equation}
After applying Jensen's  inequality, we get
$$
A'(t)\geq
 (c-\mu a)A(t)
+(\mu b-d)A^2(t),
$$
from which the blow up of the solution can be obtained. Namely, by using the change of variables
$ \Xi (t)=e^{-(c-\mu a)t}A(t) $, we can obtain
\begin{align*}
	 \Xi'(t)&=-(c-\mu a)e^{-(c-\mu a)t}A(t)+e^{-(c-\mu a)t}A'(t)
	 \\&\geq
 (\mu b-d)e^{-(c-\mu a)t}A^2(t)
 = (\mu b-d)e^{(c-\mu a)t}\Xi^2(t)
\end{align*}
or (as long as $\Xi(t)>0$, remembering that $\Xi(0)=A(0)>0$)
\begin{align}\label{Xi'}
	\left(\frac{-1}{\Xi}\right)'(t) \geq  (\mu b-d)e^{(c-\mu a)t}\Rightarrow
\frac{1}{\Xi(t)} \le \frac{1}{\Xi(0)}  - \frac{\mu\,b -d}{c - \mu\,a} \,	(e^{(c-\mu a)t}-1).
\end{align}
When $c - \mu\,a >0$, we see that $\Xi^{-1}(t) >0$ cannot remain true for
$$
 t \ge t^* := (c-\mu a)^{-1}\log\frac{(\mu b-d)A(0)+(c-\mu a)}{(\mu b-d) A(0)},
$$
so that blow up occurs before time $t^*$. When $c - \mu\,a <0$, a similar computation shows that a blow up also occurs, under the extra assumption $A_0 >\frac{\mu a-c}{\mu b-d}$.

%
\end{proof}


\subsection{A sufficient blowup condition by the concavity method for Neumann boundary condition}

As has been stated in the beginning of section 2, after the transformation $v=u-\frac{a}{2b}$, the equation can be rewritten into
$$
\partial_t v=-b\Delta v^2+(c-d\frac{a}{2b}-dv)(v+\frac{a}{2b}).
$$
In this subsection, we consider the following more general equation with homogeneous boundary condition,
\begin{align}
  \left\{\begin{array}{ll}
  \partial_t v
  =  -b\Delta v^m+h(v),
 & \text{in } \Omega\times(0,T),  \\
       \frac{ \partial v^m}{\partial \nu}=0,
 & \text{on } \partial\Omega \times(0,T),\\
v(x,0)=v_0(x)\geq0,& \text{in } \Omega,
        \end{array}
\right.\label{cdp1a}
\end{align}
and after giving a result about the blowup for the above general problem, we explain how (and under which conditions) it applies to problem \eqref{cdp1}. We refer the interested reader to \cite{Le90,Le00,Fi92,QuSou07}.
The main result of this subsection is the following

\begin{thm}\label{BUConcavity}
Suppose that $m>1$, and $h$ is a continuous real function such that for all $s\ge 0$, one has  $s^mh(s)\geq2 H(s)$,
where
$H(s) :=\int_0^s mt^{m-1}h(t)\;dt.$
We assume that $v:=v(x,t)$ is a smooth nonnegative solution to  problem \eqref{cdp1a} on $[0,T]$, such that
$$
\frac{b}{2}\int_\Omega |\nabla v_0(x)^{m}|^2\;dx
  +\int_\Omega H(v_0(x))\;dx>0. $$
Then there exists $t_*>0$ (depending only on $m$, $b$, $h$ and $v(0,\cdot)$) such that
 $T < t^*$.
 \medskip

  In other words, a blowup occurs before $t_*$. More precisely,
\begin{equation}\label{naa}
\lim_{t\to t^*}\int_0^t\int_\Omega v^{m+1}(x,\tau)\;dx\;d\tau=+\infty.
\end{equation}
\end{thm}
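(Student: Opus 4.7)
My plan is to apply Levine's concavity method.

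\textbf{Energy identity.} Multiplying equation \eqref{cdp1a} by $\partial_t(v^m)=mv^{m-1}v_t$ and integrating by parts over $\Omega$ (boundary term vanishes by $\partial_\nu v^m=0$),
\[
E'(t)=m\!\int_\Omega v^{m-1}v_t^2\,dx\ge 0,\qquad E(t):=\tfrac{b}{2}\!\int_\Omega|\nabla v^m|^2\,dx+\!\int_\Omega H(v)\,dx,
\]
so that $E(t)\ge E(0)>0$ throughout $[0,T)$.

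\textbf{Evolution of $L$.} Setting $L(t):=\int_\Omega v^{m+1}\,dx$ and testing the PDE with $v^m$, one finds $\tfrac{1}{m+1}L'(t)=b\!\int|\nabla v^m|^2+\!\int v^mh(v)$, so the hypothesis $s^mh(s)\ge 2H(s)$ provides the key lower bound
$L'(t)\ge 2(m+1)E(t)\ge 2(m+1)E(0)>0$; in particular $L$ is strictly increasing. A Cauchy--Schwarz applied to $L'(t)=(m+1)\!\int v^{(m+1)/2}v^{(m-1)/2}v_t$ moreover gives $(L'(t))^2\le\tfrac{(m+1)^2}{m}L(t)E'(t)$, and hence $E(t)\le L'(t)/(2(m+1))$.

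\textbf{Concavity functional and blow-up.} Define $G(t):=\int_0^tL(\tau)\,d\tau+\beta$ with $\beta>0$ to be chosen, so that $G'=L$, $G''=L'\ge 2(m+1)E>0$, and $G\ge\beta$. Writing $L(t)-L(0)=(m+1)\!\int_0^t\!\int v^mv_\tau$ and applying Cauchy--Schwarz in $(x,\tau)$ together with $E\le G''/(2(m+1))$ yields
\[
(L(t)-L(0))^2\ \le\ \tfrac{m+1}{2m}\bigl(G(t)-\beta\bigr)G''(t)\ \le\ \tfrac{m+1}{2m}G(t)G''(t).
\]
Applying Young's inequality $L^2\le(1+\epsilon)(L-L(0))^2+(1+1/\epsilon)L(0)^2$ for a small $\epsilon>0$, and choosing $\delta,\epsilon>0$ with $(1+\delta)(1+\epsilon)<\tfrac{2m}{m+1}$ (possible since $m>1$) together with $\beta$ large enough that $2(m+1)E(0)\beta\ge(1+\delta)(1+1/\epsilon)L(0)^2$, one obtains the concavity inequality
$G(t)G''(t)\ge(1+\delta)(G'(t))^2$ on all of $[0,T)$. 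Equivalently, $(G^{-\delta})''\le 0$. Since $G(0)=\beta>0$ and $G'(0)=L(0)>0$ (the hypothesis $E(0)>0$ forces $v_0\not\equiv 0$), the concave positive function $G^{-\delta}$ has strictly negative initial slope and must therefore vanish at some finite $t^*\le\beta/[\delta L(0)]$. Hence $G(t)\to+\infty$ as $t\to t^*$, which is precisely \eqref{naa} and forces $T<t^*$.

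\textbf{Main obstacle.} The technical heart of the proof lies in the concavity step: the inequality $L'\ge 2(m+1)E$ cannot be differentiated termwise, so $LL''\ge\gamma(L')^2$ is not obtainable working directly with $L$. The time-integrated functional $G$ replaces this differentiation by a Cauchy--Schwarz-in-time estimate, but generates a cross term involving the nonzero initial value $L(0)$ that must be absorbed by taking the additive shift $\beta$ sufficiently large. The interplay of $\epsilon,\delta,\beta$ has to be tuned to preserve a concavity gain $\delta>0$ while simultaneously dominating the cross term, which is the most delicate point of the argument.
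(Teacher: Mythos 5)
Your proof is correct and follows essentially the same route as the paper: the monotone energy $E$, the inequality $\Psi''\ge 2(m+1)E$ coming from $s^mh(s)\ge 2H(s)$, and the space--time Cauchy--Schwarz bound are exactly the paper's concavity-method ingredients. The only difference is how the initial term $L(0)=\Psi'(0)$ is handled: you shift the functional by $\beta$ and absorb the cross term via Young's inequality (using $\beta G''\ge 2(m+1)E(0)\beta$), which yields concavity of $G^{-\delta}$ from $t=0$ and an explicit bound $t^*\le \beta/(\delta L(0))$, whereas the paper instead waits until a large time $t_0$ where $\Psi'(t)$ dominates $\Psi'(0)$; both devices are standard and equivalent in substance.
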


\begin{rem}
Note that the function $h$ is not assumed to be nonnegative. Actually, Theorem~\ref{BUConcavity} still holds when $h$ is negative, or when it changes sign.
\end{rem}

\begin{rem}
When we consider problem \eqref{cdp1} with homogeneous Neumann boundary condition, we are led to use $m=2$ and $ h(s):= (c-d\frac{a}{2b}-ds)(s+\frac{a}{2b})$ in Thm. \ref{BUConcavity}.  The condition $s^mh(s)\geq2 H(s)$ (for all $s\ge 0$)  becomes (remember that $a,b>0$, but the sign of $c,d$ is not fixed)
	\begin{align}\label{ccondition}
c\leq\min\left\{\frac{ad}{b},\frac{ad}{2b}\right\}.
	\end{align}
Coming back to the original unknown $u$ (instead of $v$), Thm.  \ref{BUConcavity} states that (under assumption (\ref{ccondition})) the (smooth) solutions to \eqref{cdp1} which are such that $u\geq \frac{a}{2b}$ is pointwise true, cannot exist globally.
 \medskip

 Note that a significant limitation of this result is related to the assumption that $u\geq \frac{a}{2b}$ pointwise. Indeed, this estimate is propagated at the formal level by the equation only in very special cases in which $a,b,c,d$ are linked by some equality, like when $c= ad/(2b) \ge 0$.
 
\end{rem}

\medskip

\begin{proof}
The proof is given by contradiction argument. Assume that the solution is global, and
define
$$
\Psi(t) := \int_0^t\int_\Omega v^{m+1}(x,\tau)\;dx\;d\tau \ge 0.
$$
The idea of the concavity method is to find an $\alpha>0$ and a $t_0\ge 0$ such that $\Psi^{-\alpha}$ is a concave function on $[t_0, +\infty[$.
Then, from the concavity property of $\Psi^{-\alpha}$ written at time $t_0$ in a differential way, we get
$$ \forall t \ge t_0, \qquad
\Psi^{-\alpha}(t)
\leq \Psi^{-\alpha}(t_0)
-\alpha \Psi^{-\alpha-1}(t_0)\Psi'(t_0)(t-t_0).
$$
Using this inequality together  with the fact that $\Psi'(t) \ge 0$ for all $t\ge 0$,
we obtain an upper bound $t^*$ for the blow up time (that is, the first  time $t^*$ such that $\Psi^{-\alpha}(t^*)=0$):
$$
t^*\leq\frac{\Psi^{-\alpha}(t_0)+\alpha t_0\Psi^{-\alpha-1}(t_0)\Psi'(t_0)}{\alpha \Psi^{-\alpha-1}(t_0)\Psi'(t_0)}
=
\frac{\Psi(t_0)+\alpha t_0\Psi'(t_0)}{\alpha \Psi'(t_0)}.
$$
To prove this concavity property, we compute
$$
(\Psi^{-\alpha})''
=
\alpha \Psi^{-\alpha-2}
\left(
(\alpha+1) \left(\Psi'\right)^2
-\Psi \Psi''
\right),
$$
from which it can be deduced that a sufficient condition for $\Psi^{-\alpha}$ to be concave (on $[t_0, +\infty[$) is
that $$ \forall t \ge t_0,\quad
\Psi(t) \,\Psi''(t)-(\alpha+1) \left(\Psi'(t)\right)^2 \ge 0.
$$

In fact, we start by computing the derivative of the functional:
\begin{align}
	\Psi'(t)&=\int_\Omega v^{m+1}(x,t)\;dx
=\int_0^t\int_\Omega (v^{m+1})_\tau\;dx\;d\tau
	+\int_\Omega v_0^{m+1}\;dx ,
\label{Psi'}
\end{align}
and its second derivative:
\begin{align}
	\Psi''(t)&=\int_\Omega( v^{m+1})_t\;dx
=(m+1)\int_\Omega v^{m}v_t\;dx
\label{Psi''} .
\end{align}
Next, we test \eqref{cdp1a} with $(m+1)v^{m}$, and get
\begin{align}
(m+1)  \int_\Omega v^{m}v_t\;dx
  &=  -b(m+1)\int_\Omega v^m\Delta v^m\;dx
  +(m+1)\int_\Omega v^mh(v)\;dx
  \nonumber\\
    &=
     b(m+1)\int_\Omega | \nabla v^{m}|^2\;dx
     +(m+1)\int_\Omega v^mh(v)\;dx\label{lmest} ,
\end{align}
so that substituting \eqref{lmest} into \eqref{Psi''}, we obtain
\begin{align}
\Psi''(t)=
  b(m+1)\int_\Omega | \nabla v^{m}|^2\;dx
     +(m+1)\int_\Omega v^mh(v)\;dx.\label{Psi''a}
\end{align}
We now test \eqref{cdp1a} with $(v^{m})_t$, and get
\begin{align}
 0\leq\int_\Omega mv^{m-1}(v_t)^2\;dx
  &=  -b\int_\Omega (v^{m})_t\Delta v^m\;dx
  +\int_\Omega (v^{m})_th(v)\;dx
  \nonumber\\
 &=\frac{d}{dt}\left\{ \frac{b}{2}\int_\Omega |\nabla v^{m}|^2\;dx
  +\int_\Omega H(v)\;dx\right\}
 =:
 \frac{d}{dt}E(t),
 \label{mEnergy}
\end{align}
where we recall that
$$
H(v) :=\int_0^v ms^{m-1}h(s)\;ds.
$$
From \eqref{mEnergy}, we can also deduce that
\begin{align}
	E(t)-E(0)&=\int_0^t\int_\Omega (v^m)_\tau v_\tau \;dx\;d\tau
	=m\int_0^t\int_\Omega v^{\frac{m-1}{2}} v_\tau v^{\frac{m-1}{2}} v_\tau \;dx\;d\tau
	\nonumber\\
	&=4\frac{m}{(m+1)^2}\int_0^t\int_\Omega [(v^{\frac{m+1}{2}})_\tau]^2 \;dx\;d\tau .
	\label{mEnergy2}
\end{align}
If the initial energy is strictly positive, namely
\begin{equation}
E(0)>0,
\label{E(0)pos}	
\end{equation}
 then we see that for all $t\ge 0$, $E(t)>0$, thanks to (\ref{mEnergy2}).

With the help of \eqref{mEnergy},
 identity \eqref{Psi''a} becomes
\begin{align}
\Psi''(t)&=
  b(m+1)\int_\Omega | \nabla v^{m}|^2
     +(m+1)\int_\Omega v^mh(v)\;dx
     \nonumber\\
     &= 2(m+1)E(t)
    +(m+1)\int_\Omega v^mh(v)\;dx
    -2(m+1)\int_\Omega H(v)\;dx .
\end{align}
At this point, we use the assumption on $h,H$ to obtain
\begin{equation}\label{gG-Condition}
s^mh(s)
\geq
2 H(s)
=
2\int_0^s mr^{m-1}h(r)\;dr
\quad \forall s\geq0 ,
\end{equation}
and  conclude that
\begin{align}
\forall t \ge 0, \qquad
\Psi''(t)\geq 2(m+1)E(t)\geq2(m+1)E(0)>0.
\label{Psi''final}
\end{align}
From the above inequality, we also get that $t \mapsto \Psi'(t)$ is strictly increasing.
Furthermore,
\begin{align}
\Psi''(t)\Psi(t)&\stackrel{\eqref{Psi''final}}{\geq}
 2(m+1)E(t)\Psi(t)
=
  2(m+1)E(t)  \int_0^t\int_\Omega (v^{\frac{m+1}{2}})^2 \;dx\;d\tau
 \nonumber\\
 &
\stackrel{\eqref{mEnergy2}}{=}
  2(m+1)\left(
4\frac{m}{(m+1)^2}\int_0^t\int_\Omega [(v^{\frac{m+1}{2}})_\tau]^2 \;dx\;d\tau
+E(0)\right)
 \int_0^t\int_\Omega (v^{\frac{m+1}{2}})^2\;dx\;d\tau
 \nonumber\\
 &\stackrel{\eqref{E(0)pos}}{>}
   \frac{8m}{m+1}
   \int_0^t\int_\Omega [(v^{\frac{m+1}{2}})_\tau]^2 \;dx\;d\tau
    \int_0^t\int_\Omega (v^{\frac{m+1}{2}})^2 \;dx\;d\tau
     \nonumber\\
 &\stackrel{Cauchy-Schwarz}{\geq}
    \frac{2m}{m+1}
  \left( \int_0^t\int_\Omega (v^{m+1})_\tau \;dx\;d\tau\right)^2
  =    \frac{2m}{m+1}\bigg(\Psi'(t)-\int_\Omega v_0^{m+1}\;dx \bigg)^2
  \nonumber\\
   &=    \frac{2m}{m+1}(\Psi'(t)-\Psi'(0))^2.
\end{align}
We now prove that there exists $\alpha>0$ and $t_0 >0$ such that
$$
\forall t \ge t_0, \qquad
 \frac{2m}{m+1}(\Psi'(t)-\Psi'(0))^2
 \geq
 (\alpha+1)\left(\Psi'(t)\right)^2 ,
$$
or equivalently
\begin{equation}\label{naq}
\forall t \ge t_0, \qquad
\left[
1-\left(\frac{(m+1)(\alpha+1)}{2m}\right)^\frac12
\right]
\Psi'(t)
\geq
\Psi'(0) .
\end{equation}
In order to do so, we choose $0<\alpha<\frac{m-1}{m+1}$ (remember that $m>1$). Due to the fact that $\Psi'(t)\to+\infty$ as $t\to+\infty$ (because of \eqref{Psi''final}), we can indeed choose $t_0>0$ large enough for (\ref{naq}) to hold.
\par
Therefore, we finally obtain
$$
\forall t \ge t_0, \qquad \Psi''(t)\Psi(t)\geq(\alpha+1)\left(\Psi'(t)\right)^2.
$$

As observed at the beginning of the proof,
the above inequality implies that we cannot extend the solution for all times, since (\ref{naa}) holds at a some point $t_*>0$.


\end{proof}


\section{Annex: Self similar solutions blowing up in the whole space}

In this annex, we provide a few explicit computations concerning  problem \eqref{cdp0} in the case when $c=d=0$:

$$
  \partial_t u
  =  \Delta [(a-b u)u],
$$
and we still consider only the nonnegative solutions.
\medskip

We recall that it is equivalent to studying the following problem:
 \begin{equation}\label{cdp1hom}
  \partial_t v+b\Delta v^2=0,
 \end{equation}
 with $u \ge 0 \iff  v\geq - \frac{a}{2b}$.
 \medskip

This equation is a (reverse in time) porous medium equation, for which
explicit solutions of  Barenblatt type \cite{Ba53} can be computed (on a given time interval
  $[0, T_*)$ for the first type given below):
\begin{equation}\label{BarSSS}
v(x,t)=\frac{1}{b\,(T_*-t)}
\left(
(T_*-t)^\frac{2}{n+2}-\frac{x^2}{4(n+2)}
\right)_+	,
\quad \text{in } \ \mathbb{R}^n\times [0,T_*) ,
\end{equation}
and
\begin{equation}\label{BarSSS2}
v(x,t)=\frac{-1}{b\,(T_* + t)}
\left(
(T_*+t)^\frac{2}{n+2}-\frac{x^2}{4(n+2)}
\right)_+	,
\quad \text{in } \ \mathbb{R}^n\times [0,+\infty).
\end{equation}
It is possible to take linear combinations of those solutions and still get solutions, though the equation is nonlinear, as long as the support of those solutions remain separate (more precisely, when each two solutions have support with empty intersections during the time of existence of the solutions).
\medskip

 For an inital datum
 $$ v_0(x) = \sum_{i=1}^A  \frac{1}{b\,T_i}
\left(
T_i^\frac{2}{n+2}-\frac{(x-x_i)^2}{4(n+2)}
\right)_+
- \sum_{j=A+1}^{A +B} \frac{1}{b\,T_j}
\left(
T_j^\frac{2}{n+2}-\frac{(x-x_j)^2}{4(n+2)}
\right)_+, $$
with $A, B \in \N$, $T_k>0$, $x_k \in \R^n$ ($k=1,\ldots,A+B$),
the function defined by
$$ v(t,x) = \sum_{i=1}^A  \frac{1}{b\,(T_i-t)}
\left(
(T_i-t)^\frac{2}{n+2}-\frac{(x-x_i)^2}{4(n+2)}
\right)_+
- \sum_{j=A+1}^{A +B} \frac{1}{b\,(t+T_j)}
\left(
(t+T_j)^\frac{2}{n+2}-\frac{(x-x_j)^2}{4(n+2)}
\right)_+, $$
is a solution to eq. (\ref{cdp1hom}) on the time interval $[0, \tau[$ for
$$ \tau := \min_{i=1,..,A} T_i, $$ (if $A=0$, $\tau =+\infty$)
provided  that
$$ \forall j,k = A+1,..,A+B, \,j \neq k, \qquad
 (\tau + T_j)^{\frac1{n+2}} + (\tau + T_k)^{\frac1{n+2}} < \frac{|x_k - x_j|}{2\sqrt{n+2}}, $$
 and
 \begin{equation}\label{la}
  \forall j = A+1,..,A+B, \, l = 1,..,A,\, t \in [0,\tau] \qquad
 (t + T_j)^{\frac1{n+2}} + (T_l - t)^{\frac1{n+2}} < \frac{|x_l - x_j|}{2\sqrt{n+2}}.
 \end{equation}
The condition (\ref{la}) can be rewritten without any direct reference to the time $t$
in the following way:
\begin{itemize}
\item
 If $T_l - T_j \le 0$,
$$ T_j^{\frac1{n+2}} + T_l^{\frac1{n+2}} < \frac{|x_l - x_j|}{2\sqrt{n+2}}, $$
\item
 If $T_l - T_j \ge 2\tau$,
$$ (\tau + T_j)^{\frac1{n+2}} + (T_l - \tau)^{\frac1{n+2}} < \frac{|x_l - x_j|}{2\sqrt{n+2}}, $$
\item
 If $T_l - T_j \in [0, 2\tau[$,
$$  2^{\frac{n+1}{n+2}}\, (T_l + T_j)^{\frac1{n+2}} < \frac{|x_l - x_j|}{2\sqrt{n+2}}. $$
\end{itemize}
Note also that $v(t,x) \ge - \frac{b}{2a}$ for all $t \in [0,\tau[$ and $x\in\R^n$
as soon as for all $j=A+1,..,A+B$, $T_j > (a/2)^{-1 - 2/n}$.
\medskip

The explicit solutions defined above feature in an explicit way the properties of blowup discussed previously. The value $v=0$ (or $u = \frac{a}{2b}$) plays a decisive role in the existence or not of a blowup, as can be guessed from the study of the parabolicity regions of the equation.
\medskip

Finally, we propose a figure illustrating the computations above.

\begin{figure}[hbt]\begin{center}
               \begin{minipage}{.5\textwidth}
                    \includegraphics[width=\textwidth]{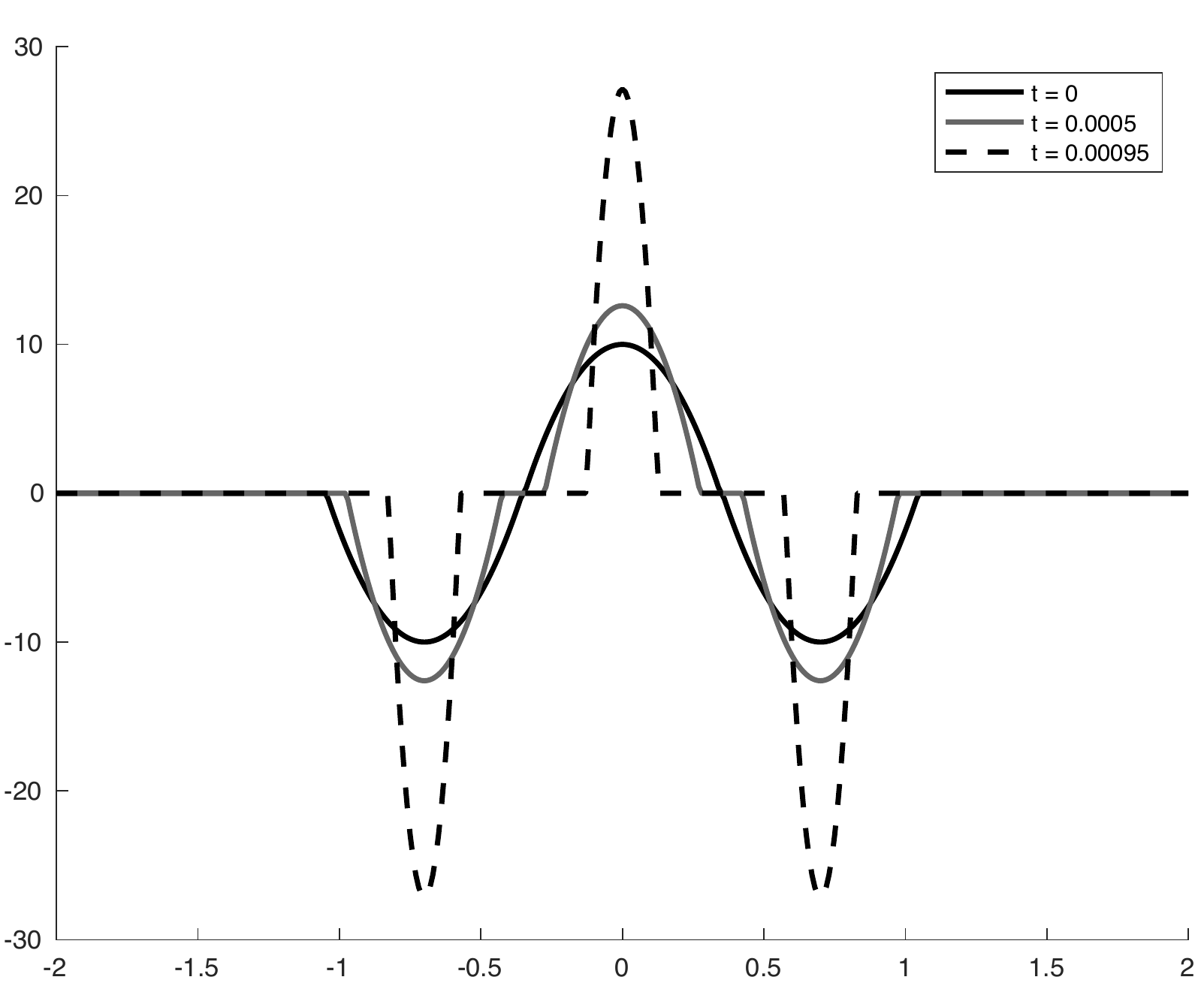}
                    \caption{Explicit solution shown at different times}
                \end{minipage}
               \end{center}
 \end{figure}

 In this figure, a solution is drawn, with one positive bump and two negative ones, with the specific feature that when $t=0$ the branches of the bump coincide and connect. For this solution, we drew three different time instances.
 \medskip

 \section{Conclusion}
  This paper is a first attempt to tackle problems of the form:
 \begin{align*}
  \left\{\begin{array}{ll}
  \partial_t u
  =  \Delta [(a-b u)u]+(c-d u)u
 & \text{in } \Omega\times(0,T),  \\
        \mathcal{B}[u]=0
 & \text{on } \partial\Omega \times(0,T),\\
u(x,0)=u_0(x)\geq 0,
        \end{array}
\right.
\end{align*}
whose main characteristic is the fact that the quantity inside the Laplacian does not a priori have a fixed sign, so that global-in-time existence of solutions does not always hold. We proved the global existence and uniqueness of classical solutions for initial data and parameters such that the problem is of parabolic type. The non-existence of non-trivial steady states is studied,
and some  blow up results using Kaplan's method on the one hand, and the concavity method on the other hand, are also presented.

\subsection*{Acknowledgements}
We thank Stephan Knapp for providing us with Figure 1.


\nocite{*}

  \bibliographystyle{alpha}
  \bibliography{CDPbiblio}

	



\end{document}